\documentclass[a4paper,11pt]{article}
\usepackage{fullpage}
\usepackage{amsmath,amssymb,latexsym,amsthm}
\usepackage[pdftex]{graphicx}
\usepackage{graphics}
\usepackage{psfrag}
\usepackage{color, epsfig}
\usepackage{mathrsfs}
\usepackage[english]{babel}
\usepackage{bbm}
%\usepackage[utf8]{inputenc}
%\usepackage[notcite,notref]{showkeys}
%
%

%-------------------------------------

%-------------------------------------

%-------------------------------------
\newtheorem{theorem}{Theorem}[section]

\newtheorem{proposition}[theorem]{Proposition}

\newtheorem{remark}[theorem]{Remark}

\newtheorem{goal}[theorem]{Goal}

%-------------------------------------
\numberwithin{equation}{section}

%-------------------------------------
%
\title{Mixing Douglas' and weak majorization and factorization theorems}

\author{
Pierre Lissy\footnote{CERMICS, Ecole des Ponts, IP Paris, Marne-la-Vall\'ee, France.
}
}

\date\today
\begin{document}
\maketitle
\begin{abstract}The Douglas' majorization and factorization theorem characterizes the inclusion of operator ranges in Hilbert spaces. Notably, it reinforces the well-established connections between the inclusion of kernels of operators in Hilbert spaces and the (inverse) inclusion of the closures of the ranges of their adjoints. This note aims to present a ``mixed'' version of these concepts for operators with a codomain in a product space. Additionally, an application in control theory of coupled systems of linear  partial differential equations is presented.

\end{abstract}

\vspace*{0.3cm}

\textbf{MSC 2020:} 47A05, 47B02, 47N10, 93A10, 93B05, 93B07.

\vspace*{0.3cm}

\textbf{Keywords:} linear operators in Hilbert spaces, majorization, factorization, controllability, observability.
\section{Introduction}

The goal of this note is to provide new insights into the celebrated Douglas' majorization and factorization theorem  proved in \cite{MR203464} (see Theorem \ref{th1}) on the inclusion of operator ranges in Hilbert spaces. We will reinterpret Douglas' majorization and factorization theorem as a reinforcement of well-known properties related to the inclusions of the closures of operator ranges. This can also be expressed in a form resembling a ``non-quantitative majorization'' and ``non-continuous factorization'' theorem (see Theorem \ref{th2}), which we will refer to as the ``weak majorization and factorization theorem''. We do not claim originality for this ``theorem''; rather, its significance lies in its analogy with Douglas' theorem.

Once this basic analogy is established, we will examine operators whose codomain is a product Hilbert space. This will enable us to derive a ``mixed'' version of the Douglas' and weak majorization and factorization theorems (our main Theorem \ref{th:main}). The necessary and sufficient conditions given in this theorem are not really easy to manipulate, so we give some sufficient conditions (Propositions \ref{prop1} and \ref{propp}) that might be easier to verify in practice.

It is well-known that Douglas' majorization and factorization theorem is an essential tool for studying abstract linear control systems in infinite dimensional spaces (see the seminal paper \cite{77DR}). Here, the applications we have in mind come from the control theory of coupled systems of linear  partial differential equations (see \textit{e.g.} \cite[Theorem 1.10]{MR4482305}, where the kind of mixed version presented later on appears naturally), but our hope is that the present note might also be helpful in other fields of mathematics. Notably, we present in Section \ref{app} an application to a toy model of coupled heat system. This is for illustration purpose and  we do not intend to transform this note into a specialized paper in control theory. Further advanced applications in control theory  will be discussed elsewhere.

\subsection{Douglas' majorization and factorization Theorem and a weaker analogue}

Let us set some useful and standard notations. For a set $E$ in a metric space, $\overline E$ is the closure of $E$. For an operator $T$ between linear spaces, $\mathcal R(T)$ is the range of $T$, and $\mathcal N(T)$ is the kernel of $T$. For $E$ and $F$ two normed vector spaces, $\mathcal  L_c(E,F)$ is the set of linear continuous maps from $E$ to $F$ (endowed with the operator norm $|||\cdot|||$), whereas $\mathcal  L(E,F)$ is the set of linear (and non-necessarily continuous) maps from $E$ to $F$.

\vspace*{0.1cm}

Now, let us consider $H_1,H_2,H_3$ some (all) real or (all) complex Hilbert space, endowed with some scalar or hermitian product (assumed to be left-linear in the complex case), with associated norm respectively $||\cdot||_i$ ($i=1,2,3$).  
 Consider two operators 
$A\in \mathcal L_c(H_1,H_3)$ and $B\in \mathcal L_c(H_2,H_3)$. Then, we have the following so-called Douglas' majorization and factorization theorem: 

\begin{theorem}\label{th1}
The  following statements are equivalent :
\begin{itemize}
\item[(i)] (range inclusion) $\mathcal R(A)\subset \mathcal R(B)$ (or, equivalently, for any $h_1\in H_1$, there exists $h_2\in H_2$  such that $Ah_1=Bh_2$).
\item[(ii)] (majorization) There exists $C>0$ such that for any $ z\in H_3$, we have 
$$||A^*z||_1\leqslant C ||B^*z||_2.$$
\item[(iii)] (factorization) $A= BC$ for some $C\in \mathcal L_c(H_1,H_2)$ (or, equivalently, $A^*=DB^*$ for some $D\in \mathcal L_c(H_2,H_1)$).
\end{itemize}
\end{theorem}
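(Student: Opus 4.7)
The plan is to prove the cycle (iii) $\Rightarrow$ (i), (iii) $\Rightarrow$ (ii), (i) $\Rightarrow$ (iii), and (ii) $\Rightarrow$ (iii), which closes all three equivalences. The two implications emanating from (iii) are immediate: if $A = BD$ for some $D \in \mathcal L_c(H_1,H_2)$, then $Ah_1 = B(Dh_1) \in \mathcal R(B)$, and taking adjoints yields $A^* = D^* B^*$, so $\|A^*z\|_1 \leq |||D||| \, \|B^*z\|_2$, which is (ii) with constant $|||D|||$.

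For (i) $\Rightarrow$ (iii), I would construct $D$ as a minimal-norm selection. For each $h_1 \in H_1$, (i) says that $B^{-1}(\{Ah_1\})$ is a nonempty closed affine subset of $H_2$, and I define $Dh_1$ to be its unique element of smallest norm, equivalently its orthogonal projection onto $\mathcal N(B)^\perp$. Uniqueness of this minimizer forces $D$ to be linear. Boundedness follows from the closed graph theorem: if $h_1^{(n)} \to h_1$ in $H_1$ and $Dh_1^{(n)} \to k$ in $H_2$, continuity of $A$ and $B$ yields $Bk = Ah_1$, while $k$ inherits membership in the closed subspace $\mathcal N(B)^\perp$, and therefore $k = Dh_1$ by uniqueness, showing that the graph of $D$ is closed.

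The main step, and the one I expect to be the main obstacle, is (ii) $\Rightarrow$ (iii). I would define a linear map $T_0$ on the (not necessarily closed) subspace $\mathcal R(B^*) \subset H_2$ by $T_0(B^* z) := A^* z$. The majorization (ii) simultaneously ensures well-definedness (since $B^* z_1 = B^* z_2$ forces $\|A^*(z_1-z_2)\|_1 \leq C \|B^*(z_1-z_2)\|_2 = 0$) and Lipschitz continuity with constant $C$. I would then extend $T_0$ by continuity to $\overline{\mathcal R(B^*)}$, and by zero on its orthogonal complement $\overline{\mathcal R(B^*)}^{\perp} = \mathcal N(B)$, obtaining $\tilde T \in \mathcal L_c(H_2, H_1)$ with $\tilde T B^* = A^*$ on all of $H_3$. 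Taking adjoints once more gives $A = B \tilde T^*$, so setting $D := \tilde T^*$ yields the desired factorization. The Hilbert structure enters essentially in the orthogonal decomposition $H_2 = \overline{\mathcal R(B^*)} \oplus \mathcal N(B)$ used to perform the extension, and (implicitly) in the existence of the adjoints themselves.
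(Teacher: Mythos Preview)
Your argument is correct and is essentially the classical proof of Douglas' theorem: the easy implications out of (iii), the minimal-norm selection combined with the closed graph theorem for (i) $\Rightarrow$ (iii), and the definition of a bounded map on $\mathcal R(B^*)$ followed by continuous/orthogonal extension for (ii) $\Rightarrow$ (iii). I see no gaps.

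However, there is nothing to compare against: the paper does \emph{not} supply its own proof of this theorem. Theorem~\ref{th1} is stated as a known result with a reference to Douglas' original article \cite{MR203464}, and the text immediately moves on to the weaker analogue (Theorem~\ref{th2}). So your write-up stands on its own rather than as a variant of something in the paper. If anything, your (ii) $\Rightarrow$ (iii) argument is exactly the template the paper later reuses (in non-quantitative form) in the proof of Theorem~\ref{th2} and in the proof of the mixed version, so your approach is fully in line with the spirit of the paper.
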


Theorem \ref{th1} can be seen as a refinement of the following ``theorem''.

\begin{theorem}\label{th2}
The  following statements are equivalent :
\begin{itemize}
\item[(i)] (closure of range inclusion)  $\overline{\mathcal R(A)}\subset \overline{\mathcal R(B)}$  (or, equivalently, ${\mathcal R(A)}\subset \overline{\mathcal R(B)}$, or equivalently, for any $h_1\in H_1$ and any $\varepsilon>0$, there exists $h_2\in H_2$ such that $||Ah_1-Bh_2||_3\leqslant \varepsilon$).
\item[(ii)] (non-quantitative majorization)  $\mathcal N(B^*)\subset \mathcal N(A^*)$, \textit{i.e.} $$\forall  z\in H_3,\, B^*z=0 \Rightarrow A^*z=0.$$
\item[(iii)] (non-continous factorization) $A^*=DB^*$ for some $D\in \mathcal L(H_2,H_1)$ not necessarily continuous.
\end{itemize}
\end{theorem}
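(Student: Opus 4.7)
My plan is to prove the three equivalences by a cyclic argument $(i) \Leftrightarrow (ii)$ and $(ii) \Leftrightarrow (iii)$, where the nontrivial implication is the construction of $D$ in step $(iii)$.

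For the equivalence $(i) \Leftrightarrow (ii)$, I would invoke the standard Hilbert space identity $\overline{\mathcal R(T)} = \mathcal N(T^*)^{\perp}$, valid for any continuous operator $T$ between Hilbert spaces. Applying this to both $A$ and $B$ turns the inclusion $\overline{\mathcal R(A)}\subset \overline{\mathcal R(B)}$ into $\mathcal N(A^*)^{\perp}\subset \mathcal N(B^*)^{\perp}$, and taking orthogonal complements (which reverses inclusions between closed subspaces) yields $\mathcal N(B^*)\subset \mathcal N(A^*)$. The reverse implication goes through the same identities. The parenthetical reformulations in $(i)$ follow from the fact that the closure of a set contains the set, so $\mathcal R(A)\subset \overline{\mathcal R(B)}$ is equivalent to $\overline{\mathcal R(A)}\subset \overline{\mathcal R(B)}$.

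The implication $(iii) \Rightarrow (ii)$ is immediate: if $A^* = DB^*$ and $B^*z = 0$, then $A^*z = D(B^*z) = 0$ by linearity of $D$, regardless of continuity.

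The main step is $(ii) \Rightarrow (iii)$. Assuming $\mathcal N(B^*) \subset \mathcal N(A^*)$, I would first define $D$ on the subspace $\mathcal R(B^*) \subset H_2$ by setting $D(B^*z) := A^*z$ for every $z \in H_3$. This assignment is well-defined and linear precisely because of the kernel inclusion: if $B^*z_1 = B^*z_2$, then $z_1 - z_2 \in \mathcal N(B^*) \subset \mathcal N(A^*)$, so $A^*z_1 = A^*z_2$. The remaining point is to extend $D$ from $\mathcal R(B^*)$ to all of $H_2$ as a (not necessarily continuous) linear map. This is achieved by picking an algebraic complement of $\mathcal R(B^*)$ in $H_2$ (equivalently, completing a Hamel basis of $\mathcal R(B^*)$ into a Hamel basis of $H_2$, which requires the axiom of choice) and extending $D$ by zero on this complement. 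The resulting $D \in \mathcal L(H_2,H_1)$ satisfies $A^* = DB^*$ by construction. I do not expect any serious obstacle here; the only subtlety is emphasizing that continuity genuinely has to be dropped, since otherwise one recovers Douglas' theorem and would need the stronger majorization hypothesis.
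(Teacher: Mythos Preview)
Your proposal is correct and follows essentially the same approach as the paper: the equivalence $(i)\Leftrightarrow(ii)$ via the orthogonal identity $\overline{\mathcal R(T)}=\mathcal N(T^*)^\perp$, the trivial implication $(iii)\Rightarrow(ii)$, and the construction of $D$ on $\mathcal R(B^*)$ extended by zero on an algebraic complement. The paper's proof is slightly terser but identical in substance.
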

\begin{remark}Since $D$ needs not to be continuous here, the factorization property $A=BC$ for some $C\in \mathcal L(H_1,H_2)$ not necessarily continuous cannot be derived by passing to the adjoint. This is reasonable, since $A=BC$ implies that $\mathcal R(A)\subset \mathcal R(B)$, even if $C$ is not continuous, and this property is clearly stronger than the desired one  ${\mathcal R(A)}\subset \overline{\mathcal R(B)}$ in infinite dimension (consider for instance a non-onto operator $B$ with dense range, and any $A$ such that $\mathcal R(A)\not\subset\mathcal R(B)$).  Notably, by Theorem \ref{th1}, we see that $A=BC_1$ for some $C_1\in \mathcal L(H_1,H_2)$ not necessarily continuous implies that  $A=BC_2$ for some $C_2\in \mathcal L_c(H_1,H_2)$.
\end{remark}

\begin{proof}
We claim no originality in the statement  and proof of this ``theorem''. The equivalence between (i) and (ii) is  of course totally standard  by passing to the orthogonal. Let us remind how to prove the equivalence between (ii) and (iii).
That (iii) implies (ii) is trivial by linearity of $D$. The fact that (ii) implies (iii) is standard: we first set 
$$D : B^*h_2 \in \mathcal R(B^*) \mapsto A^*h_2 \in \mathcal R(A^*),$$
which is well-defined by (ii) and linear by easy computations. Then, we consider any algebraic complement $F$ of the range of $B^*$, and for $h_3= f+B^*h_2$ with $f\in F$, we set 
$D(h_3)=A^*h_2.$
Then $D$  is well-defined by (ii), linear by easy computations, and by setting $f=0$ we indeed have that for any $h_2\in H_2$, $D(B^*h_2)=A^*h_2$.
\end{proof}
\subsection{A mixed version for a range in a product space}

Now, assume that we have $H_3= H_4\times H_5$, where $H_4,H_5$  are some Hilbert spaces on the same field as $H_1,H_2$, endowed with some scalar or hermitian product, with associated norm respectively $||\cdot||_i$ ($i=4,5$). As usual, we endow in this case $H_3$ with the Hilbertian norm 
$$||(h_4,h_5)||_{H_3}^2=||h_4||^2_{H_4}+||h_5||^2_{H_5},\,\, (h_4,h_5)\in H_4\times H_5.$$
Then, for $h_i \in H_i$ ($i=1,\dots 4$), one can write 
$$A(h_1)=\begin{pmatrix}A_1(h_1)\\A_2(h_1)\end{pmatrix}= \begin{pmatrix}A_1\\A_2\end{pmatrix}h_1,$$
with $A_1\in \mathcal L_c(H_1,H_4),\,A_2\in\mathcal L_c(H_1,H_5)$,
or equivalently 
$$A^*(h_4,h_5)= A^*_1(h_4)+A^*_2(h_5)=\begin{pmatrix}A_1^*&A_2^*\end{pmatrix}\begin{pmatrix}h_4\\h_5\end{pmatrix},$$
and the same for $B$: 
$$B(h_2)=\begin{pmatrix}B_1(h_2)\\B_2(h_2)\end{pmatrix}= \begin{pmatrix}B_1\\B_2\end{pmatrix}h_2,$$
with $B_1\in \mathcal L_c(H_2,H_4),\,B_2\in\mathcal L_c(H_2,H_5)$, or equivalently 
$$B^*(h_4,h_5)= B^*_1(h_4)+B^*_2(h_5)=\begin{pmatrix}B_1^*&B_2^*\end{pmatrix}\begin{pmatrix}h_4\\h_5\end{pmatrix}.$$

Our main result will some kind of ``mixed'' usual-weak Douglas' majorization and factorization Theorem: for the first component of $A$, we ask for an exact inclusion of range, but for the second component, we ask for some approximate inclusion of range. More precisely,  our objective is to characterize the following property.

\begin{goal}\label{goal}
For any $h_1\in H_1$ and any $\varepsilon>0$, find  $h_2\in H_2$ such that $A_1h_1=B_1h_2$ and $||A_2h_1-B_2h_2||_5\leqslant \varepsilon$. 
\end{goal}

The important point here is that the same $h_2$  has to realise both objectives. Clearly, this has to be more restrictive than the following properties taken separately : 
\begin{itemize}\item For any $h_1\in H_1$, find  $h_2\in H_2$ such that $A_1h_1=B_1h_2$.
\item For any $h_1\in H_1$  and any $\varepsilon>0$,  find $\tilde h_2 \in H_2$ such that   $||A_1h_1-B_1\tilde h_2||_5\leqslant \varepsilon$ and $||A_2h_1-B_2\tilde h_2||_5\leqslant \varepsilon$. 
\end{itemize}
This gives some natural necessary conditions given in Proposition \ref{nec}. One of the goal of this note will be to understand how to fill the gap in order to find reinforcements of these necessary conditions that turn out to be also sufficient (see Theorem \ref{th:main}). %We will notably give two sufficient conditions, in Propositions \ref{prop1} and  \ref{propp}, that might be easier to verify in practice (see Section \ref{app}).

\begin{proposition}\label{nec}
Necessary conditions for Goal \ref{goal} to hold are the following (equivalent) properties:
(i) There exists $C>0$ such that for any $ h_4\in H_4$, we have 
$$||A^*_1h_4||_1\leqslant C ||B^*_1h_4||_2, $$ and $\mathcal N(B^*)\subset \mathcal N(A^*)$, \textit{i.e.} $\forall  (h_4,h_5)\in H_3,\, B_1^*h_4+B_2^*h_5=0 \Rightarrow A_1^*h_4+A_2^*h_5=0.$

\vspace*{0.2cm}

\noindent(ii)   $A_1= B_1C_1$ for some $C\in \mathcal L_c(H_1,H_4)$ (or, equivalently, $A^*_1=D_1B^*_1$ for some $D_1\in \mathcal L_c(H_4,H_1)$), and $A^*=DB^*$ for some $D\in \mathcal L(H_2,H_1)$ not necessarily continuous.

 \end{proposition}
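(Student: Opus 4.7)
The plan is to observe that the Goal yields two separate range-inclusion statements, each of which matches the hypothesis of one of the theorems already established in the paper. For any fixed $h_1\in H_1$, applying the Goal with any positive $\varepsilon$ produces an $h_2\in H_2$ such that $A_1h_1 = B_1h_2$, so $\mathcal R(A_1)\subset \mathcal R(B_1)$. Moreover, for such an $h_2$,
$$||Ah_1-Bh_2||_{H_3}^2 = ||A_1h_1-B_1h_2||_4^2 + ||A_2h_1-B_2h_2||_5^2 \leqslant \varepsilon^2,$$
so letting $\varepsilon\to 0$ gives $Ah_1\in \overline{\mathcal R(B)}$, that is $\mathcal R(A)\subset \overline{\mathcal R(B)}$.

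From these two inclusions I would simply invoke the previous theorems. Applying Theorem \ref{th1} to the pair $(A_1,B_1)$, the first inclusion is equivalent both to the quantitative majorization $||A_1^*h_4||_1\leqslant C||B_1^*h_4||_2$ (first half of (i)) and to the bounded factorization $A_1=B_1C_1$ with $C_1\in \mathcal L_c(H_1,H_2)$ (first half of (ii)). Applying Theorem \ref{th2} to the full pair $(A,B)$, where $H_3=H_4\times H_5$ is viewed as a single Hilbert space endowed with its product norm, the second inclusion is equivalent both to $\mathcal N(B^*)\subset \mathcal N(A^*)$ (second half of (i)) and to the possibly discontinuous factorization $A^*=DB^*$ with $D\in \mathcal L(H_2,H_1)$ (second half of (ii)). These two applications simultaneously establish that (i) and (ii) are necessary and that they are equivalent to each other.

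The proof is essentially bookkeeping around Theorems \ref{th1} and \ref{th2}, so I do not expect a genuine obstacle. The only subtle point worth flagging is the asymmetry between the two halves of each condition: the first half involves only the component operators $A_1,B_1$, while the second half uses the \emph{full} adjoints $A^*,B^*$, because the approximate clause of the Goal couples the two components through the single choice of $h_2$. This is precisely what distinguishes Proposition \ref{nec} from the ``naive'' conjunction of separate Douglas and weak-Douglas applications to the two components, and it also explains why turning these necessary conditions into sufficient ones, which is the content of the forthcoming Theorem \ref{th:main}, requires further work.
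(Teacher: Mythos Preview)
Your proof is correct and follows essentially the same approach as the paper: extract from the Goal the two separate inclusions $\mathcal R(A_1)\subset\mathcal R(B_1)$ and $\mathcal R(A)\subset\overline{\mathcal R(B)}$, then apply Theorem~\ref{th1} to the first and Theorem~\ref{th2} to the second. The added commentary on the asymmetry and on why sufficiency needs more is accurate but extraneous to the proof itself.
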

 \begin{remark}\label{rerem}In general, properties (i) and (ii) of Proposition \ref{nec} are not sufficient for Goal \ref{goal} to hold. We will provide a counterexample in Proposition \ref{remex}.
 \end{remark}
 \begin{proof}Necessarily, if Goal \ref{goal} holds, then,   for any $h_1\in H_1$, there exists $h_2\in H_2$ such that $A_1h_1=B_1h_2$. So, one can apply Douglas' majorization and factorization theorem \ref{th1} and deduce that there exists $C>0$ such that for any $ z\in H_3$, we have 
$$||A^*_1z||_1\leqslant C ||B^*_1z||_2, $$ and $A_1= B_1C_1$ for some $C\in \mathcal L_c(H_1,H_4)$, or, equivalently, $A^*_1=D_1B^*_1$ for some $D_1\in \mathcal L_c(H_4,H_1)$.
 
Moreover, clearly, we also have that there exists   $h_2\in H_2$ such that $||A_1h_1-B_1h_2||^2_4+||A_2h_1-B_2h_2||^2 \leqslant \varepsilon$.
So, applying Theorem \ref{th2} leads to the $\mathcal N(B^*)\subset \mathcal N(A^*)$, \textit{i.e.} $\forall  (h_4,h_5)\in H_3,\, B_1^*h_4+B_2^*h_5=0 \Rightarrow A_1^*h_4+A_2^*h_5=0,$ and $A^*=DB^*$ for some $D\in \mathcal L(H_2,H_1)$ not necessarily continuous. This gives the (equivalent) necessary conditions (i) and (ii).\end{proof}
\section{Main result}

We can now state our mixed version of a majorization and factorization theorem.

\begin{theorem}\label{th:main}The following statements are equivalent:
\begin{itemize}
\item[(i)] (mixed range inclusion and closure of range  inclusion) Goal \ref{goal} is fullfilled: for  any $h_1\in H_1$ and any $\varepsilon>0$, there exists  $h_2\in H_2$ such that $A_1h_1=B_1h_2$ and $||A_2h_1-B_2h_2||\leqslant \varepsilon$.
\item[(ii)] (mixed quantitative and non-quantitative majorization)  The two following conditions hold.
\begin{itemize}\item There  exists $C>0$ such that for any  $ h_4\in H_4$, we have 
$$||A^*_1h_4||_1\leqslant C ||B^*_1h_4||_2.$$
\item  Consider any $h_5\in H_5$ such that there exists some $(y_n)\in H_4^{\mathbb N*}$ for which
 $B_2^*h_5=\lim_{n\rightarrow +\infty}  B_1^* y_n$. Then,  $A_2^*h_5=\lim_{n\rightarrow +\infty}  A_1^* y_n.$
\item[(iii)] (mixed continuous and non-continuous factorization)   Let us call $\Pi$ the orthogonal projection on $\mathcal N(B_1)$ in $H_2$.  Then, $A^*_1=D_1B^*_1$ and $A^*_2=(D_1+D_2\Pi)B^*_2$ for some $D_1\in \mathcal L_c(H_2,H_1)$ and $D_2\in \mathcal L(\mathcal N(B_1),H_1)$ not necessarily continuous.
\end{itemize}
\end{itemize}
\end{theorem}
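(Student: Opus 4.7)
The plan is to establish the cycle (iii) $\Rightarrow$ (ii) $\Rightarrow$ (i) $\Rightarrow$ (iii), gluing Douglas' Theorem \ref{th1} (for the ``first coordinate'') with the weak Theorem \ref{th2} (for the ``second coordinate'') via the orthogonal projection $\Pi$. The implication (iii) $\Rightarrow$ (ii) is almost immediate: the factorization $A_1^*=D_1B_1^*$ with $D_1\in\mathcal L_c(H_2,H_1)$ yields the quantitative majorization with constant $|||D_1|||$; for the second bullet, if $B_2^*h_5=\lim_n B_1^*y_n$, then $B_1^*y_n\in\mathcal R(B_1^*)\subset\mathcal N(B_1)^\perp$ together with continuity of $\Pi$ gives $\Pi B_2^*h_5=0$, so $A_2^*h_5=D_1B_2^*h_5=\lim_n D_1B_1^*y_n=\lim_n A_1^*y_n$.

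For (ii) $\Rightarrow$ (i), Douglas' Theorem \ref{th1} applied to $A_1,B_1$ produces, for each $h_1\in H_1$, some $h_2^0\in H_2$ with $B_1h_2^0=A_1h_1$. Since valid candidates for $h_2$ are exactly $h_2^0+\mathcal N(B_1)$, the desired conclusion is equivalent to $A_2h_1-B_2h_2^0\in\overline{B_2(\mathcal N(B_1))}$. Passing to orthogonal complements, this membership amounts to $\langle A_2h_1-B_2h_2^0,h_5\rangle=0$ for every $h_5$ with $B_2^*h_5\in\mathcal N(B_1)^\perp=\overline{\mathcal R(B_1^*)}$. For such an $h_5$, writing $B_2^*h_5=\lim_n B_1^*y_n$ and invoking the second bullet of (ii) to obtain $A_2^*h_5=\lim_n A_1^*y_n$, a chain of adjoint identities using $B_1h_2^0=A_1h_1$ yields $\langle B_2h_2^0,h_5\rangle=\langle A_2h_1,h_5\rangle$, closing the argument.

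For (i) $\Rightarrow$ (iii), Douglas' theorem provides a continuous $D_1\in\mathcal L_c(H_2,H_1)$ with $A_1^*=D_1B_1^*$, using the range inclusion $\mathcal R(A_1)\subset\mathcal R(B_1)$ forced by (i). The key step is to show that $A_2^*-D_1B_2^*$ vanishes on $\mathcal N(\Pi B_2^*)=\{h_5:B_2^*h_5\in\overline{\mathcal R(B_1^*)}\}$: given such an $h_5$ with $B_2^*h_5=\lim_n B_1^*y_n$, the quantitative Douglas bound $||A_1^*z||_1\leq C||B_1^*z||_2$ (also a consequence of (i)) forces $(A_1^*y_n)$ to be Cauchy, and a duality argument using (i) for fixed $h_1$ with $\varepsilon\to 0$ identifies its limit as $A_2^*h_5$; continuity of $D_1$ then gives $A_2^*h_5=\lim_n D_1B_1^*y_n=D_1B_2^*h_5$. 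Applying the non-continuous factorization part of Theorem \ref{th2} to the operators $A_2^*-D_1B_2^*$ and $\Pi B_2^*$ then produces the required $D_2\in\mathcal L(\mathcal N(B_1),H_1)$. The main obstacle is precisely the identification of $\lim_n A_1^*y_n$ with $A_2^*h_5$: the convergence of the sequence itself relies on the quantitative first-coordinate Douglas bound, while identifying the limit requires combining the approximate identity $||A_2h_1-B_2h_2^\varepsilon||_5\leq\varepsilon$ supplied by (i) with a duality computation and letting $\varepsilon\to 0$.
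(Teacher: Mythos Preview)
Your proof is correct and follows essentially the same approach as the paper: the same cycle of implications, the same use of Douglas' theorem for the first coordinate, and the same reduction of the second coordinate to the kernel/range inclusion $\mathcal N(\Pi B_2^*)\subset\mathcal N(A_2^*-D_1B_2^*)$ (equivalently $\mathcal R(A_2-B_2C_1)\subset\overline{\mathcal R(\tilde B_2)}$) handled via Theorem~\ref{th2}. The only cosmetic difference is in (i)$\Rightarrow$(iii): the paper reads the range inclusion directly from the approximation in (i), whereas you verify the dual kernel inclusion by a Cauchy-plus-duality argument that in effect proves the second bullet of (ii) along the way---a slightly longer but equally valid route.
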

\begin{remark} (ii) and (iii) enable to understand the gap between the necessary conditions given in Proposition \ref{nec}  and necessary and sufficient conditions. Notably, one readily observes that condition (ii) in Theorem \ref{th:main} is stronger than condition (i) in Proposition  \ref{nec} (consider a constant sequence), and  that condition (iii) in Theorem \ref{th:main} is stronger than condition (ii) in Proposition  \ref{nec} (just extend $D_2$ to $0$ on $\mathcal N(B_1)^\perp$ so that it is now defined on $H_2$).
\end{remark}
\begin{proof}

 Let us first prove that (i) implies (iii). 
First of all, we remark that (i) notably implies that $\mathcal R(A_1)\subset \mathcal R(B_1)$, so we can apply Theorem \ref{th1} and deduce that   $A_1= B_1C_1$  for some $C_1 \in \mathcal L_c(H_1,H_2)$.
By assumption, for any $\varepsilon>0$ and any $h_1\in H_1$, there exists  $h_2\in H_2$ such that $A_1h_1=B_1h_2$ and $||A_2h_1-B_2h_2||\leqslant \varepsilon$. For such $\varepsilon,h_1,h_2$, we have 
$B_1(C_1h_1-h_2)=0$, so $h_2= C_1h_1 +u$ for some $u\in\mathcal N(B_1)$. Now, for fixed $h_1$, (i) implies that we should also have 
$A_2h_1=B_2h_2=B_2C_1h_1+B_2u$, \textit{i.e.} 
$||A_2h_1-B_2C_1h_1-B_2u||\leqslant \varepsilon$ for some  $u\in \mathcal N(B_1)$. We deduce that 
$$\overline{\mathcal R(A_2-B_2C_1)}\subset\overline {\mathcal R (\tilde B_2)},$$ where $\tilde B_2=( B_2)_{|\mathcal N(B_1)}.$
Applying Theorem \ref{th2} implies that 
that there exists some $D_2\in \mathcal L(\mathcal N(B_1),H_1)$ not necessarily continuous such that 
$$A_2^*-C_1^*B_2^*=D_2\tilde B_2^*.$$
Remarking that $\tilde B_2^*= \Pi B_2^*$ and introducing $D_1=C_1^*$ gives that (iii) is verified.

\vspace*{0.2cm}

Now, let us prove that  (iii) implies (ii).  This is just a question of remarking that  for any $(h_4,h_5)\in H_4\times H_5$, we have 
$$\begin{aligned}A^*1h_4+A_2^*h_5&=D_1B_1^*h_4+(D_1+D_2\Pi)B^*_2h_5\\&=(D_1+D_2\Pi)(B_1^*h_4+B^*_2h_5),\end{aligned}$$
since $B_1\Pi=0$ by definition of $\Pi$ (so $\Pi^* B_1^*=\Pi B_1^*=0$). Choosing $h_5=0$ leads to  
$$||A^*_1h_4||_1\leqslant C ||B^*_1h_4||_2,$$
with $C=|||D_1|||$, which gives the first property of (ii). For the second property, it is a little bit more intricate. Assume that for some $h_5\in H_5$, we have some $(y_n)\in H_4^{\mathbb N*}$ such that 
 $$B_2^*h_5=\lim_{n\rightarrow +\infty}  B_1^* y_n.$$
This assertion  is equivalent to  the fact that  $B_2^*h_5\in \overline{\mathcal R(B^*_1)}= \mathcal N(B_1)^\perp$, \textit{i.e.}, is equivalent to the fact that $\Pi B_2^*h_5=0$. So, for such a $h_5$, we have by (iii) that 
$A^*_1h_5=D_1B^*_1h_5$ and $A^*_2h_5=D_1B^*_2h_5$. Now, remark that for any $n\in\mathbb N^*$, we have
$D_1B^*_1y_n=A_1y_n$. By continuity of $D_1$, this quantity converges  as $n\rightarrow +\infty$ to $D_1B^*_2h_5=A_2^*h_5$, whence the desired result.

\vspace*{0.2cm}

 It remains to prove that (ii) implies (i). First of all, applying Douglas' majorization and factorization theorem  \ref{th1} to $A_1$ and $B_1$, we know that $A_1=B_1C_1$ for some $C_1\in \mathcal L_c(H_1,H_2)$. Let us prove that 
 $${\mathcal R(A_2-B_2C_1)}\subset\overline{\mathcal R (\tilde B_2)},$$ 
 where $\tilde B_2=( B_2)_{|\mathcal N(B_1)}$, \textit{i.e.}, by passing to the orthogonal, 
 $$\mathcal N(\Pi B_2^*)\subset \mathcal N(A_2^*-C_1^*B_2^*).$$
 Let us consider some $h_4\in  \mathcal N(\Pi B_2^*)$. This means that $\Pi B_2^*h_4=0$, \textit{i.e.} $B_2^*h\in \mathcal N(B_1)^\perp$, \textit{i.e.} 
 $B_2^*h \in \overline{\mathcal R(B_1^*)}$. 
Hence, we  have some $(y_n)\in H_4^{\mathbb N*}$ such that 
 $$B_2^*h_5=\lim_{n\rightarrow +\infty}  B_1^* y_n,$$
 and by hypothesis, we also have 
 $$A_2^*h_5=\lim_{n\rightarrow +\infty}  A_1^* y_n=\lim_{n\rightarrow +\infty}  C_1^*B_1^* y_n=C_1^*B_2^*h_5.$$
 So we have $h_5\in \mathcal N(A_2^*-C_1^*B_2^*) $,  as needed.
 We deduce that 
  $${\mathcal R(A_2-B_2C_1)}\subset\overline{\mathcal R (\tilde B_2)}.$$ 
Hence, for any $\varepsilon>0$ and any $h_2\in H_2$, we have  $||A_2h_1-B_2C_1h_1-B_2u||\leqslant \varepsilon$ for some  $u\in \mathcal N(B_1)$.  Setting $h_2=C_1h_1+u$, we have (i) has needed, since $B_1h_2=B_1(C_1h_1+u)=A_1h_1$.

\end{proof}
Now that we have proved Theorem \ref{th:main}, we can go back to the counterexample evoked in Remark \ref{rerem}.
\begin{proposition}\label{remex}
Condition (i) or (ii) in Proposition \ref{nec} are in general not sufficient for Goal \ref{goal} to hold. 
\end{proposition}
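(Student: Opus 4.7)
The approach I would take is to build a small, explicit counterexample on sequence spaces, exploiting the precise mismatch highlighted by Theorem \ref{th:main}(ii): the necessary conditions of Proposition \ref{nec} only constrain $A_2^*$ at points $h_5$ for which $B_2^* h_5$ already lies in $\mathcal{R}(B_1^*)$, whereas sufficiency requires a compatibility on the strictly larger set where $B_2^* h_5 \in \overline{\mathcal{R}(B_1^*)}$. The plan is therefore to arrange for $B_2^*$ to land at an element of $\overline{\mathcal{R}(B_1^*)} \setminus \mathcal{R}(B_1^*)$, with enough freedom to let $A_2^*$ fail the limit condition.

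Concretely, I would set $H_1=H_2=H_4=\ell^2(\N^*)$ and $H_5=\K$, with $\{e_n\}$ the canonical basis. Define $B_1 e_n = e_n/n$ (self-adjoint, injective, dense non-closed range) and $B_2 h = \langle h, v\rangle$ with $v = (1/n)_{n\geq 1}$; a direct check gives $v \in \ell^2$ but $v \notin \mathcal{R}(B_1)$, since $B_1 x = v$ would force $x_n = 1$, which is not square-summable. Finally, set $A_1 = 0$ and $A_2 h = \langle h, e_1\rangle$.

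For the verification, I would check three short points. First, the quantitative majorization in Proposition \ref{nec}(i) is trivial since $A_1^* = 0$. Second, the kernel inclusion $\mathcal{N}(B^*) \subset \mathcal{N}(A^*)$ is vacuous: any $(h_4, s) \in \mathcal{N}(B^*)$ satisfies $B_1 h_4 + s v = 0$, and since $v \notin \mathcal{R}(B_1)$ this forces $s = 0$ and then $h_4 = 0$ by injectivity of $B_1$, so $\mathcal{N}(B^*) = \{0\}$. Third, Goal \ref{goal} fails: with $h_1 = e_1$, the constraint $A_1 h_1 = B_1 h_2$ reduces to $B_1 h_2 = 0$, whence $h_2 = 0$ by injectivity of $B_1$, and therefore $|A_2 h_1 - B_2 h_2| = |\langle e_1, e_1\rangle| = 1$, which cannot be made smaller than, say, $\varepsilon = 1/2$.

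The only real obstacle is conceptual rather than technical: one must identify that the gap between the necessary and sufficient conditions lies exactly in the discrepancy between $\mathcal{R}(B_1^*)$ and its closure. Once this is recognised, the counterexample writes itself; reducing $H_5$ to one dimension makes the kernel inclusion trivially satisfied and the failure of the Goal completely transparent, with no subtlety regarding the choice of $h_1$ or the witnessing $\varepsilon$.
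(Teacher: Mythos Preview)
Your proposal is correct and is, in fact, a cleaner counterexample than the one in the paper. The paper keeps $H_5$ infinite-dimensional, takes $B_1(e_n)=e_n/n^2$, $B_2(h)=\langle h,x\rangle x$ with $x=\sum e_n/n$, $A_1=0$, and defines $A_2$ as the orthogonal projection onto the complement of $\overline{(B_2)^{-1}(\mathcal R(B_1))}$; it then shows the Goal fails by contradiction via the limit condition of Theorem~\ref{th:main}(ii). You instead collapse $H_5$ to one dimension, which makes $\mathcal N(B^*)=\{0\}$ immediate from $v\notin\mathcal R(B_1)$ and the injectivity of $B_1$, and lets you exhibit the failure of the Goal directly (the constraint $B_1h_2=0$ forces $h_2=0$) without invoking Theorem~\ref{th:main} at all. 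Both constructions exploit the same mechanism you identified---$B_2^*$ hits an element of $\overline{\mathcal R(B_1^*)}\setminus\mathcal R(B_1^*)$---but yours isolates it with the minimum of moving parts, while the paper's version has the advantage of explicitly illustrating how the characterisation in Theorem~\ref{th:main} detects the obstruction and of tying in with the hypothesis $(B_2^*)^{-1}(\mathcal R(B_1^*))=(B_2^*)^{-1}(\overline{\mathcal R(B_1^*)})$ of Proposition~\ref{prop1}.
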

\begin{proof}
 Let  us give an appropriate counterexample.  Consider $H$ any separable Hilbert space of infinite dimension, endowed with a norm $||\cdot||$, and consider $\{e_n\}_{n\in\mathbb N^*}$ a Hilbert basis of $H$. We choose $$H_1=H_2=H_4=H_5=H.$$ 
Assume for the moment that one can find selfadjoint $B_1$ and $B_2$  (so we can forget the adjoint on  these operators in our reasoning) such that  the closure of $(B_2)^{-1}(\mathcal R(B_1))$, denoted by $F$, is not equal to $H$, and such that $(B_2)^{-1}(\overline{\mathcal R(B_1)})=H$.
We take $A_1=0$, so that $A_1=B_1C_1$ with $C_1=0\in \mathcal L_c(H)$. We consider $A_2$ the orthogonal projection on $F^\perp$. Assume that $B_1h_4+B_2h_5=0$ for some $ (h_4,h_5)\in H^2$. We deduce that $B_2 h_5 \in \mathcal R(B_1)$, so $h_5\in F$. By our choice of $A_2$, we deduce that $A_2h_5=0$, so we have the property that $A_1h_4+A_2h_5=0$, as needed. However, we do not have the stronger property that for any $h_1\in H_1$ and any $\varepsilon>0$, there exists  $h_2\in H$ such that $A_1h_1=B_1h_2$ and $||A_2h_1-B_2h_2||\leqslant \varepsilon$. Indeed, by contradiction, this would mean that (iii) of Theorem \ref{th:main} holds. Remark that any $h_5\in H$ is such that there exists some $(y_n)\in H_4^{\mathbb N*}$ for which
 $B_2h_5=\lim_{n\rightarrow +\infty}  B_1 y_n$, since $(B_2)^{-1}(\overline{\mathcal R(B_1)})=H$. We need to conclude that $A_2h_5=0$, \textit{i.e.} $h_5 \in \mathcal N(A_2)=F$, which is of course not possible if we have chosen $h_5\in F^\perp\setminus \{0\}$.

It remains to prove the existence of such $B_1,B_2$. Consider $B_1$ as the unique linear operator such that 
$B_1(e_n)=e_n/n^2$ for all $n\in\mathbb N^*$, and $B_2(h)=\langle h,x\rangle x$, where $x=\sum_{n=1}^{+\infty} e_n/n$.  $B_1$ is a diagonal operator on a Hilbert basis, so it is selfadjoint.  $B_2$ is also clearly selfadjoint by using the definition of the adjoint operator. Moreover, $\mathcal R(B_1)$ is dense in $H$, so that we have $(B_2)^{-1}(\overline{\mathcal R(B_1)})=H$.  Then, 
$h\in  (B_2)^{-1}(\mathcal R(B_1))$ is equivalent to the fact that $B_2(h)\in \mathcal R(B_1)$, which is equivalent to the existence of a sequence $(f_n)_{n\in \mathbb N^*}$ of $l^2(\mathbb N^*)$ such that for any $n\in\mathbb N^*$, we have 
$$\frac{\langle h,x \rangle}{n}= \frac{f_n}{n^2}, \,\,\textit{i.e.} \,\, \langle h,x \rangle= \frac{f_n}{n}.$$
Notably, $f_n/n$ has necessarily to be constant, which means that $f_n=0$, since $(f_n)_{n\in \mathbb N^*}\in l^2(\mathbb N^*)$. We deduce that for any $h\in (B_2)^{-1}(\mathcal R(B_1)$, we have $\langle x,h\rangle = 0$, which implies that 

$(B_2)^{-1}({\mathcal R(B_1)})\subset x^\perp$,  and  concludes our proof since $x^\perp$ is closed.
\end{proof}

Our previous theorem might be difficult to prove in practice. So, our next goal is to give effective sufficient conditions, that can be verified in practice, as we will see afterwards on a simple example. More precisely, in what follows, we are interested to understand what can be added as an hypothesis so that the conditions given in Proposition \ref{nec} become sufficient. The first one is the following.

\begin{proposition}\label{prop1} Assume that $(B_2^{*})^{-1}(\mathcal R(B_1^*))=(B_2^{*})^{-1}(\overline{\mathcal R(B_1^*)})$. Then, condition (i) or (ii) in Proposition \ref{nec} are sufficient in order that Goal \ref{goal} holds.
\end{proposition}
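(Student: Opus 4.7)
By Theorem \ref{th:main}, it suffices to verify condition (ii) of that theorem under the present hypotheses. The first assertion of (ii) is free, as it coincides with the first half of condition (i) of Proposition \ref{nec}, which is assumed. The substance of the argument lies in the second assertion: given $h_5\in H_5$ and $(y_n)\in H_4^{\mathbb N^*}$ with $B_1^*y_n\rightarrow B_2^*h_5$, we must show $A_1^*y_n\rightarrow A_2^*h_5$.

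Fix such an $h_5$ and such a sequence $(y_n)$. The condition $B_2^*h_5\in\overline{\mathcal R(B_1^*)}$ translates into $h_5\in (B_2^*)^{-1}(\overline{\mathcal R(B_1^*)})$. The additional assumption then promotes this to $h_5\in (B_2^*)^{-1}(\mathcal R(B_1^*))$, producing a genuine $y\in H_4$ such that $B_1^*y=B_2^*h_5$. From here the two necessary conditions interlock cleanly. On the one hand, the quantitative majorization $||A_1^*h_4||_1\leqslant C||B_1^*h_4||_2$ applied to $h_4=y_n-y$ yields $||A_1^*y_n-A_1^*y||_1\leqslant C||B_1^*y_n-B_1^*y||_2\rightarrow 0$, hence $A_1^*y_n\rightarrow A_1^*y$. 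On the other hand, since $B^*(-y,h_5)=-B_1^*y+B_2^*h_5=0$, the kernel inclusion $\mathcal N(B^*)\subset\mathcal N(A^*)$ forces $-A_1^*y+A_2^*h_5=0$, i.e. $A_1^*y=A_2^*h_5$. Combining, $A_1^*y_n\rightarrow A_2^*h_5$, and condition (ii) of Theorem \ref{th:main} is verified.

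The only genuine step is the promotion from $\overline{\mathcal R(B_1^*)}$ to $\mathcal R(B_1^*)$ furnished by the extra hypothesis: it is precisely what allows one to replace the approximate identity $B_2^*h_5=\lim B_1^*y_n$ by the exact one $B_2^*h_5=B_1^*y$. Once this is done, the remainder is bookkeeping: the quantitative majorization plays the role of a uniform continuity estimate (transferring convergence of $B_1^*y_n$ to convergence of $A_1^*y_n$), and the non-quantitative majorization identifies the limit as $A_2^*h_5$. I do not foresee any substantive obstacle beyond keeping the correct signs when applying $B^*$ and $A^*$ to the pair $(-y,h_5)\in H_3$.
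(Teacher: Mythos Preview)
Your proof is correct and follows essentially the same route as the paper's own argument: both reduce to verifying condition (ii) of Theorem~\ref{th:main}, use the extra hypothesis to upgrade $B_2^*h_5\in\overline{\mathcal R(B_1^*)}$ to $B_2^*h_5=B_1^*y$ for some $y\in H_4$, then combine the kernel inclusion (to get $A_1^*y=A_2^*h_5$) with the quantitative majorization (to get $A_1^*y_n\to A_1^*y$).
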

\begin{remark} 
\begin{itemize}
\item Notably, if $\mathcal R(B_1^*)$ (or, equivalently, $\mathcal R(B_1)$) is closed, then, Proposition \ref{prop1} applies. This is automatically the case if one of the Hilbert spaces $H_2$ or $H_4$ is finite-dimensional.
\item Of course, the counterexample provided in Proposition \ref{remex} does not verify $(B_2^{*})^{-1}(\mathcal R(B_1^*))=(B_2^{*})^{-1}(\overline{\mathcal R(B_1^*)})$.
\end{itemize}
\end{remark}

\begin{proof}
Indeed, by (i) in Proposition \ref{nec},  we have that  there exists $C>0$ such that for any $h_4\in H_4$, we have $$||A^*_1h_4||_1\leqslant C ||B^*_1h_4||_2.$$ Now,  consider any $h_5\in H_5$ such that there exists some $(y_n)\in H_4^{\mathbb N*}$ for which
 $B_2^*h_5=\lim_{n\rightarrow +\infty}  B_1^* y_n$. Then,  $h_5 \in   (B_2^{*})^{-1}(\overline{\mathcal R(B_1^*)})$. Since $(B_2^{*})^{-1}(\mathcal R(B_1^*))=(B_2^{*})^{-1}(\overline{\mathcal R(B_1^*)})$,  we deduce that $\lim_{n\rightarrow +\infty}  B_1^* y_n=B_1^*y$ for some $y\in H_4$. So we have 
 $$B_2^*h_5+B_1^*(-y)=0,$$ and we deduce thanks to condition (ii) in Proposition \ref{nec} that  $$A_2^*h_5+A_1^*(-y)=0,$$ \textit{i.e.} $A_2^*h_5=A_1^*y$.
 Since for any $n\in\mathbb N^*$, we have 
 $$||A^*_1(y-y_n)||_1\leqslant C ||B^*(y-y_n)||_2,$$ 
 we deduce that we also have 
    $A_1^*y=\lim_{n\rightarrow +\infty}  A_1^* y_n.$
 So we deduce that 
   $A_2^*h_5=\lim_{n\rightarrow +\infty}  A_1^* y_n$, and Theorem \ref{th:main} applies, which concludes our proof.
\end{proof}

Unfortunately, for the applications we have in mind, $(B_2^{*})^{-1}(\mathcal R(B_1^*))=(B_2^{*})^{-1}(\overline{\mathcal R(B_1^*)})$ will not be verified, or hard to verify, in practice. So, our next goal is to give an effective sufficient condition, that can be verified in practice, as we will see afterwards on a simple example. %We call $F=\overline{\mathcal R(B)}$ and $\Pi_F$ the orthogonal projection on $F$ in $H_4\times H_5$.
Assume that $\mathcal N(B^*)=0$.
Then, introduce the following Hilbertian norm on $H_4$: 
$$||h_4||_*^2= ||B_1^*h_4||_2^2.$$
It is a norm. Indeed, if $B_1^* h_4=0$, notably, we have $B^*(h_4,0)=0$, and so $(h_4,0)=0$. We introduce the completion $\overline{H_4}^{||\cdot||_*}$ for this norm, and 
$$X=\overline{H_4}^{||\cdot||_*}\times H_5,$$
endowed with the Hilbertian norm

$$||(h_4,h_5)||_*^2= ||B_1^*h_4||_2^2+||h_5||_5^2,$$
which makes $X$ a Hilbert space.

Then, by definition of $||\cdot||_*$, one can extend $B^*$ on $X$ as a linear continuous map, still denoted by $B^*$.
Under these hypotheses, we have the following proposition.

\begin{proposition}\label{propp}If $\mathcal N(B^*)=0$, if the extension $B^*$ on $X$ also verifies $\mathcal N(B^*)=0$, and if there  exists $C>0$ such that for any  $ h_4\in H_4$, we have 
$$||A^*_1h_4||_1\leqslant C ||B_1^*h_4||_2,$$ then,  
for  any $h_1\in H_1$ and any $\varepsilon>0$, there exists  $h_2\in H_2$ such that $A_1h_1=B_1h_2$ and $||A_2h_1-B_2h_2||\leqslant \varepsilon$.\end{proposition}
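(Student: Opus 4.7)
The plan is to reduce the statement to verifying condition (ii) of Theorem \ref{th:main}; once that is done, the theorem gives Goal \ref{goal}. The first bullet of (ii) is exactly the hypothesized quantitative majorization, so only the second bullet needs work: I must show that whenever $h_5\in H_5$ and $(y_n)\in H_4^{\mathbb N^*}$ satisfy $B_1^* y_n\to B_2^*h_5$ in $H_2$, one also has $A_1^* y_n\to A_2^*h_5$ in $H_1$.

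The core of the argument uses the completion $\widetilde{H_4}:=\overline{H_4}^{||\cdot||_*}$ set up just before the proposition. By the very definition of $||\cdot||_*$, the operator $B_1^*:(H_4,||\cdot||_*)\to H_2$ is isometric; and the assumed majorization $||A_1^* h_4||_1\leq C||B_1^* h_4||_2=C||h_4||_*$ says exactly that $A_1^*:(H_4,||\cdot||_*)\to H_1$ is bounded by $C$. Both maps therefore extend uniquely to bounded linear operators $\widetilde{B_1^*}:\widetilde{H_4}\to H_2$ (still an isometry, with range $\overline{\mathcal R(B_1^*)}$) and $\widetilde{A_1^*}:\widetilde{H_4}\to H_1$, and $\widetilde{B_1^*}$ coincides with the restriction to the first component of the given extension of $B^*$ to $X=\widetilde{H_4}\times H_5$.

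With these extensions in hand, I would reason as follows. The sequence $(B_1^* y_n)$ converges in $H_2$, hence is Cauchy; by the isometry, $(y_n)$ is Cauchy in $\widetilde{H_4}$ and converges to some $\bar y\in\widetilde{H_4}$. Continuity gives $\widetilde{B_1^*}\bar y=\lim_n B_1^* y_n=B_2^* h_5$, so the element $(\bar y,-h_5)\in X$ lies in the kernel of the extended $B^*$ on $X$. The second assumption of the proposition forces $(\bar y,-h_5)=0$, hence $h_5=0$ and $\bar y=0$. Consequently $A_2^* h_5=0$, and by continuity of $\widetilde{A_1^*}$ one obtains $A_1^* y_n=\widetilde{A_1^*}y_n\to\widetilde{A_1^*}\bar y=0=A_2^* h_5$, which is precisely the required convergence. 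Theorem \ref{th:main} then concludes the proof.

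The main obstacle is purely one of bookkeeping around the completion: one must check that $||\cdot||_*$ really is a norm (which uses $\mathcal N(B^*)=0$ on $H_4\times H_5$, as already pointed out in the text before the statement), that the extensions of $B_1^*$ and $A_1^*$ to $\widetilde{H_4}$ are compatible with the extension of $B^*$ to $X$ used in the hypothesis, and that the concrete pair $(\bar y,-h_5)$ is a legitimate element of $X$ on which the kernel-triviality assumption can be invoked. Once these points are carefully set up, the remaining computation reduces to a single density/continuity argument.
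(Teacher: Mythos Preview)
Your proposal is correct and follows essentially the same approach as the paper: both reduce to the second bullet of condition (ii) in Theorem \ref{th:main}, use that $(y_n)$ (equivalently $(y_n,0)$) is Cauchy in the completion to obtain a limit $\bar y$ with $B_1^*\bar y=B_2^*h_5$, invoke triviality of $\mathcal N(B^*)$ on $X$ to force $\bar y=0$ and $h_5=0$, and finish via the majorization inequality. The only cosmetic difference is that the paper gets $A_1^*y_n\to 0$ directly from $||A_1^*y_n||\leq C||B_1^*y_n||\to 0$ rather than through the extension $\widetilde{A_1^*}$, but this is the same observation.
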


\begin{remark}It might seem surprising the condition of Proposition \ref{propp} does not depend on $A_2$. In fact, this comes from the strong assumption that $\mathcal N(B^*)=0$, which implies by orthogonality that $B$ has dense range. So, $\mathcal R(B)$ is so ``big'' that in this context, the exact form of $A_2$ is not important. This can also be observed during the proof.
\end{remark}
\begin{proof}According to Theorem \ref{th:main}, we only need to prove that for any $h_5\in H_5$ such that there exists some $(y_n)\in H_4^{\mathbb N*}$ for which
 $B_2^*h_5=\lim_{n\rightarrow +\infty}  B_1^* y_n$, then,  $A_2^*h_5=\lim_{n\rightarrow +\infty}  A_1^* y_n.$ Consider such a $h_5$. Then, remark that the sequence $(y_n,0)_{n\in\mathbb N^*}$ is a Cauchy sequence for the $||\cdot||_*$-norm, so it converges to some $(z,0)\in X$. By uniqueness, we necessarily have that $B_1^*z=B_2^*h_5$. So, we have that $(z,-h_5)\in \mathcal N(B^*)$, and hence $z=h_5=0$, which implies that $A_2^*h_5=0$. To conclude, remark that since  $||A^*_1y_n||_1\leqslant C ||B_1^*y_n||_2$ for any $n\in\mathbb N^*$, we also have $A^*_1 y_n\rightarrow 0$ as $n\rightarrow +\infty$, which concludes the proof.
\end{proof}

\section{An example : a coupled heat system on a bounded domain}
\label{app}
Let $T>0$. Let  $\Omega$ a bounded, connected and open set of $\mathbb{R}^d$ $d\in\mathbb N^*$ of class $\mathscr{C}^2$,  $\omega$ a nonempty open subset of $\Omega$.
Consider some reaction-diffusion model of the form 

\begin{equation}
		\label{yz0}
		\left\{ 
			\begin{array}{rcll}
				\dfrac{\partial y}{\partial t}-\Delta y &=&\mathbbm{1}_{\omega
}h& \text{ in }(0,T) \times \Omega,
				\\ 
				y &=& 0 & \text{ on }(0,T) \times \partial \Omega,
				\\ 
				y\left( 0,\cdot\right) &=& y^0 & \text{ in }\Omega,
\\
				\dfrac{\partial z}{\partial t}-\Delta z&=&y & \text{ in }(0,T) \times \Omega,
				\\ 
				z &=& 0 & \text{ on }(0,T) \times \partial \Omega,
				\\ 
				z\left( 0,\cdot\right) &=& z^0 & \text{ in }\Omega,
			\end{array}
		\right.
	\end{equation}
for some $y^0,z^0\in L^2(\Omega)$ and some $h\in L^2((0,T),\Omega)$. Remind that this system is
\begin{itemize}
\item \emph{null-controllable} in arbitrary small time : for any $T>0$, for any $y^0,z^0\in L^2(\Omega)$, there exists $h\in L^2((0,T),\Omega)$ such that $y(T)=z(T)=0$;
\item  and also \emph{approximately controllable} in arbitrary small time: for any $T>0$, for any $y^0,z^0,y^T,z^T\in L^2(\Omega)$, for any $\varepsilon>0$ there exists $h\in L^2((0,T),\Omega)$ such that $||y(T)-y^T||\leqslant \varepsilon$ and $||z(T)-z^T||\leqslant \varepsilon$.
\end{itemize}
 Both results are well-known and consequences for instance of the results given in  \cite{MR2226005}.

As explained in Remark \ref{yz0yz}, for the question we would like to investigate in what follows, there is no loss of generality by assuming that $z^0=0$, so we will instead look at

\begin{equation}
		\label{yz}
		\left\{ 
			\begin{array}{rcll}
				\dfrac{\partial y}{\partial t}-\Delta y &=&\mathbbm{1}_{\omega
}h& \text{ in }(0,T) \times \Omega,
				\\ 
				y &=& 0 & \text{ on }(0,T) \times \partial \Omega,
				\\ 
				y\left( 0,\cdot\right) &=& y^0 & \text{ in }\Omega,
\\
				\dfrac{\partial z}{\partial t}-\Delta z&=&y & \text{ in }(0,T) \times \Omega,
				\\ 
				z &=& 0 & \text{ on }(0,T) \times \partial \Omega,
				\\ 
				z\left( 0,\cdot\right) &=&0& \text{ in }\Omega,
			\end{array}
		\right.
	\end{equation}
which of course enjoys similar approximate and null controllability properties.

Let us introduce the following spaces and operators, according to our previous notations.
\begin{itemize}
\item $H_1=H_2=H_4=H_5=L^2(\Omega)$.
\item $A(y^0)=(y(T),z(T))$, where $(y,z)$ is the corresponding solution of \eqref{yz} for $h=0$ and $z_0=0$, so $A_1(y^0)=y(T)$ and $A_2(y^0)=z(T)$.
\item $B(h)=-(y(T),z(T))$, where $(y,z)$ is the corresponding solution of \eqref{yz} for $y^0=z^0=0$, so $B_1(h)=-y(T)$ and $B_2(h)=-z(T)$. %Notably, we observe that $B_2(h)=0$.
\end{itemize}

Let us now investigate a problem mixing null-controllability and approximate controllability. As far as we know, this result is new.

\begin{proposition}\label{cc}
For any $(y^0,z^T)\in L^2(\Omega)^2$ and any $\varepsilon>0$, there exists $h\in H_2$ such that the solution $(y,z)$ of \eqref{yz} verifies $y(T)=0$ and $||z(T)-z^T||_{L^2(\Omega)}\leqslant \varepsilon$.
\end{proposition}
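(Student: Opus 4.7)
My plan is to recast Proposition \ref{cc} as an instance of Goal \ref{goal} and then invoke Proposition \ref{propp}. Since $z^T$ appears as a target rather than as an input of the abstract framework, I would enlarge the input space to $\tilde H_1 := L^2(\Omega)^2$, view the pair $(y^0, z^T)$ as the new $h_1$, and set $\tilde A_1(y^0, z^T) := A_1 y^0 = S_T y^0$ and $\tilde A_2(y^0, z^T) := A_2 y^0 - z^T$, keeping $H_4 = H_5 = L^2(\Omega)$ and the operators $B_1, B_2$ as defined in the text. By linearity of \eqref{yz}, $y(T) = \tilde A_1(y^0,z^T) - B_1 h$ and $z(T) - z^T = \tilde A_2(y^0,z^T) - B_2 h$, so Proposition \ref{cc} is precisely the assertion that Goal \ref{goal} is fulfilled for the quadruple $(\tilde A_1, \tilde A_2, B_1, B_2)$.

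Writing $\psi_\varphi(s,x) := (S_{T-s}\varphi)(x)$ for the backward heat flow with terminal datum $\varphi$, a direct duality computation yields
\[
\tilde A_1^* h_4 = (S_T h_4, 0), \qquad B_1^* h_4 = -\mathbbm{1}_\omega\,\psi_{h_4}, \qquad B_2^* h_5 = -(T-s)\,\mathbbm{1}_\omega\,\psi_{h_5}.
\]
The observability inequality $\|\tilde A_1^* h_4\|_{\tilde H_1} = \|S_T h_4\|_{L^2(\Omega)} \leq C\|B_1^* h_4\|$ is exactly the classical final-state observability inequality for the scalar heat equation, obtainable from Carleman estimates \textit{à la} Fursikov--Imanuvilov. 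For $\mathcal N(B^*) = 0$ on $L^2(\Omega)^2$, assume $\psi_{h_4} + (T-s)\psi_{h_5} = 0$ on $(0,T)\times\omega$; a direct computation using that $\psi_{h_4}, \psi_{h_5}$ solve the backward heat equation gives $(\partial_s + \Delta)[\psi_{h_4} + (T-s)\psi_{h_5}] = -\psi_{h_5}$ on $(0,T)\times\Omega$, hence $\psi_{h_5} = 0$ on $(0,T)\times\omega$. Unique continuation for the heat equation and injectivity of $S_{T-s}$ then force $h_5 = 0$, and the same reasoning applied to $\psi_{h_4}$ gives $h_4 = 0$.

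The main obstacle, in my view, is the extended kernel condition $\mathcal N(B^*) = 0$ on the completion $X$. Suppose $(y_n) \subset L^2(\Omega)$ is $\|\cdot\|_*$-Cauchy and $B_1^* y_n + B_2^* h_5 \to 0$ in $L^2((0,T)\times\Omega)$, equivalently $\Psi_n := \psi_{y_n} \to \Phi := (T-s)\psi_{h_5}$ in $L^2((0,T)\times\omega)$. Each $\Psi_n$ satisfies $(\partial_s + \Delta)\Psi_n = 0$ classically, so passing to the distributional limit on $(0,T)\times\omega$ gives $(\partial_s + \Delta)\Phi = 0$ in $\mathcal{D}'((0,T)\times\omega)$; but the same explicit computation as before gives $(\partial_s + \Delta)\Phi = -\psi_{h_5}$, hence $\psi_{h_5} = 0$ on $(0,T)\times\omega$. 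Unique continuation then forces $\psi_{h_5} \equiv 0$, so $h_5 = 0$, whence $\Phi = 0$ and $\Psi_n \to 0$ in $L^2((0,T)\times\omega)$, meaning the class of $(y_n)$ in $X$ is trivial. The delicate step is precisely this distributional passage: although $\Phi$ is not itself a backward-heat solution, being an $L^2$-limit of such solutions forces it to satisfy the equation on $\omega$ in $\mathcal{D}'$, and the mismatch with the explicit identity $(\partial_s + \Delta)\Phi = -\psi_{h_5}$ produces the decisive equality $\psi_{h_5}|_{(0,T)\times\omega} = 0$. Proposition \ref{propp} then applies and delivers Proposition \ref{cc}.
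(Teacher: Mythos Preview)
Your argument is correct and follows the same overall strategy as the paper (reduce to Goal \ref{goal} and apply Proposition \ref{propp}), but your treatment of the extended kernel condition $\mathcal N(B^*)=0$ on $X$ is genuinely different and lighter. The paper proceeds by invoking a global Carleman estimate to show that every element of the completion $X$ still corresponds to an adjoint solution $\phi\in L^2((0,T')\times\Omega)$ for each $T'<T$, and only then runs the unique continuation argument. You bypass this density-extension step entirely: from $\Psi_n\to\Phi$ in $L^2((0,T)\times\omega)$ and $(\partial_s+\Delta)\Psi_n=0$ you pass to the limit in $\mathcal D'((0,T)\times\omega)$, compare with the explicit identity $(\partial_s+\Delta)\big[(T-s)\psi_{h_5}\big]=-\psi_{h_5}$, and conclude $\psi_{h_5}=0$ on $(0,T)\times\omega$ directly. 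This exploits locality of the heat operator and avoids the second use of Carleman (both proofs still need it for the majorization $\|A_1^*h_4\|\le C\|B_1^*h_4\|$). The price is that your argument is tailored to this cascade structure (the explicit factor $(T-s)$), whereas the paper's Carleman route would extend verbatim to more general lower-order couplings.

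Two minor remarks. First, from $B_1^*y_n+B_2^*h_5\to 0$ one gets $\Psi_n\to -(T-s)\psi_{h_5}$ on $\omega$, not $+(T-s)\psi_{h_5}$; the sign is irrelevant to your conclusion. Second, your enlargement $\tilde H_1=L^2(\Omega)^2$ to accommodate $z^T$ is a clean way to make explicit what the paper leaves implicit: since the hypotheses of Proposition \ref{propp} do not involve $A_2$ (cf.\ Remark 2.6), once they are verified for the original $B$ the conclusion automatically holds for your $\tilde A_2(y^0,z^T)=A_2 y^0-z^T$, and no additional work is needed.
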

\begin{proof}
By the superposition principle for linear equations, we have 
$y(T)=A_1(y^0)-B_1(h)$ and $z(T)=A_2(y^0)$. 
So we can reformulate our question as follows : for any $(y^0,z^T)\in \in L^2(\Omega)^2$ and any $\varepsilon>0$, there exists $h\in H_2$ such that the solution $(y,z)$ of \eqref{yz} verifies $A_1(y^0)=B_1(h)$ and $||A_2(y^0)-B_2(h)-z_T||\leqslant \varepsilon$. 

We first compute the adjoint operators of $A_1,B_1$ and $A,B$ (from which we can easily deduce the expression of $A_2^*$ and $B_2^*$, but they are not explicitly needed). It will be convenient to introduce the adjoint system of \eqref{yz}: 

\begin{equation}
		\label{yza}
		\left\{ 
			\begin{array}{rcll}
				-\dfrac{\partial \phi}{\partial t}-\Delta \phi &=&\psi& \text{ in }(0,T) \times \Omega,
				\\ 
				\phi &=& 0 & \text{ on }(0,T) \times \partial \Omega,
				\\ 
				\phi\left(T,\cdot\right) &=& \phi^T & \text{ in }\Omega,
\\
				-\dfrac{\partial \psi}{\partial t}-\Delta \psi&=&0 & \text{ in }(0,T) \times \Omega,
				\\ 
				\psi &=& 0 & \text{ on }(0,T) \times \partial \Omega,
				\\ 
				\psi\left(T,\cdot\right) &=& \psi^T & \text{ in }\Omega,
			\end{array}
		\right.
	\end{equation}
where $\phi^T,\psi^T\in L^2(\Omega)$. 	
We will also need the following scalar equation: 
\begin{equation}
		\label{yza2}
		\left\{ 
			\begin{array}{rcll}
				-\dfrac{\partial \eta}{\partial t}-\Delta \phi &=&0& \text{ in }(0,T) \times \Omega,
				\\ 
				\eta &=& 0 & \text{ on }(0,T) \times \partial \Omega,
				\\ 
				\eta\left(T,\cdot\right) &=& \phi^T & \text{ in }\Omega,
			\end{array}
		\right.
	\end{equation}	
where $\phi^T\in L^2(\Omega)$. Then, reasoning by density and performing integrations by parts, the solution of \eqref{yz} verifies : for any $\phi^T,\psi^T\in L^2(\Omega)$, 
$$\langle y(T),\phi^T\rangle_{L^2(\Omega)}+\langle z(T),\psi^T\rangle_{L^2(\Omega)}-\langle y^0,\phi(0)\rangle_{L^2(\Omega)}=\int_0^T\int_\omega h(t,x)\phi(t,x)dxdt,$$
where $(\phi,\psi)$ verifies \eqref{yza}
Notably, taking $\psi^T=0$ and $h=0$ leads to 
$$\langle y(T),\phi^T\rangle_{L^2(\Omega)}=\langle y^0,\phi(0)\rangle_{L^2(\Omega)}.$$
But $\psi^T=0$ implies that $\psi=0$, which implies that in fact this $\phi$ is equal to the solution $\eta$ of \eqref{yza2}.
Hence, we have 
$$A_1^*(\psi^T)=\eta(0).$$
Now, taking $h=0$ leads to 
$$\begin{aligned}\langle z(T),\psi^T\rangle_{L^2(\Omega)}+\langle y(T),\phi^T\rangle_{L^2(\Omega)}=\langle y^0,\phi(0)\rangle_{L^2(\Omega)}\end{aligned}.$$
We deduce that 
$$A^*(\phi^T,\psi^T)=\phi(0).$$

Applying a similar strategy leads to   
  $$B_1^*(\phi^T,\psi^T)= -\eta\mathbbm 1_\omega$$
  and
$$B^*(\phi^T,\psi^T)= -\phi \mathbbm 1_\omega.$$
Now, our goal is to apply Proposition \ref{propp}. $\mathcal N(B^*)=0$ is true: if $B^*(\phi^T,\psi^T)=0$, then $\phi=0$ on $(0,T)\times\mathbbm 1_\omega$. Differentiating and using \eqref{yza}, we obtain that 
$-\dfrac{\partial \phi}{\partial t}-\Delta \phi =\psi=0$ on $(0,T)\times\omega$. By Holmgren's uniqueness Theorem, $\psi$ is analytic in space, so $\psi=0$ on $(0,T)\times\Omega$. We notably deduce by  continuity in time   that $\psi^T=0$. Going back to the first equation in \eqref{yza} and doing exactly the same reasoning implies that $\phi^T=0$,  as needed.

  Introduce the following Hilbertian norm on $L^2(\Omega)^2$: 
$$||(\phi^T,\psi^T)||_*^2= \int_0^T\int_\omega\eta(t,x)^2dxdt+\int_\Omega \psi^T(x)^2dx,$$
and consider $B^*$ to be the extension of   $B^*$ on the completed space $X$. Then, we still have $\mathcal N(B^*)=0$. In order to prove that, we will prove that for any $(\phi^T,\psi^T)\in X$, we can associate a solution $(\phi,\psi)$ to \eqref{yza} such that $\psi\in L^2((0,T'),L^2(\Omega))$ for any $T'<T$. Indeed, classical Carleman estimates for the heat equation (see \textit{e.g.} \cite[Theorem 9.4.1]{fursikov1996controllability}) imply that for any $(\phi^T,\psi^T)\in L^2(\Omega)^2$, we have, for some $C>0$ (that might depend on $T$ and change from line to line in the following reasoning),
$$\int_{0}^T\int_\Omega e^{-\frac{C}{T-t}} \phi(t,x)^2dxdt\leqslant C\left(\int_0^T\int_\omega \phi(t,x)^2dxdt+\int_0^T\int_\Omega \psi(t,x)^2dxdt \right).$$
Moreover, a classical  well-posedness result also ensures that 
\begin{equation}\label{finn1}\int_0^T\int_\Omega \psi(t,x)^2dxdt \leqslant C \int_\Omega \psi^T(x)^2dx\leqslant C||(\phi^T,\psi^T)||^2_*.\end{equation}
We deduce that 
\begin{equation}\label{finn}\begin{aligned}\int_{0}^T\int_\Omega e^{-\frac{C}{T-t}} \phi(t,x)^2dxdt&\leqslant C\left(\int_0^T\int_\omega \phi(t,x)^2dxdt+ \int_\Omega \psi^T(x)^2dx\right)\\&\leqslant C\left(\int_0^T\int_\omega \eta(t,x)^2dxdt+\int_0^T\int_\omega (\phi-\eta)(t,x)^2dxdt+||(\phi^T,\psi^T)||^2_*\right)\\&\leqslant C\left(\int_0^T\int_\omega (\phi-\eta)(t,x)^2dxdt+||(\phi^T,\psi^T)||^2_*\right).\end{aligned}\end{equation}
Remark that $\tilde \eta = \phi-\eta$ verifies 
\begin{equation}
		\label{yza3}
		\left\{ 
			\begin{array}{rcll}
				-\dfrac{\partial \tilde\eta}{\partial t}-\Delta \tilde\eta &=&\psi& \text{ in }(0,T) \times \Omega,
				\\ 
				\tilde\eta &=& 0 & \text{ on }(0,T) \times \partial \Omega,
				\\ 
				\tilde\eta\left(T,\cdot\right) &=&0& \text{ in }\Omega,
			\end{array}
		\right.
	\end{equation}	
So by a classical well-posedness result, we also have 
$$\begin{aligned}\int_0^T\int_\omega (\psi-\eta)(t,x)^2dxdt &\leqslant C \int_0^T\int_\Omega (\phi-\eta)(t,x)^2dxdt \\&\leqslant C\int_0^T\int_\Omega \psi(t,x)^2dxdt\\&\leqslant C\int_\Omega\psi^T(x)^2dx\\&\leqslant C||(\phi^T,\psi^T)||^2_*.\end{aligned}$$
Going back to \eqref{finn}, we deduce that 

\begin{equation}\label{finnZ}\int_{0}^T\int_\Omega e^{-\frac{C}{T-t}} \phi(t,x)^2dxdt\leqslant C||(\phi^T,\psi^T)||^2_*.\end{equation}

From \eqref{finn1} and \eqref{finnZ}, we deduce by density that for any $T'<T$ and any $(\phi^T,\psi^T)\in X$, we can associate a solution $(\phi,\psi)$ to \eqref{yza} such that for any $T'<T$, we have $y\in L^2((0,T')\times \Omega)$. So notably, if $B^*(\phi^T,\psi^T)=0$, then, we have $\phi=\psi=0$ on $L^2((0,T')\times\Omega)$. This is true for any $T'<T$, so we deduce that $\phi=\psi=0$ on $L^2((0,T)\times\Omega)$. This exactly means that $\mathcal N(B^*)=0$ on $X$, so that Proposition \ref{propp} applies and our result is proved.
\end{proof}
\begin{remark}\label{yz0yz}In fact, Proposition \ref{cc} also implies the same property for system \eqref{yz0}, namely, For any $(y^0,z^0,z^T)\in z^T$ and any $\varepsilon>0$, there exists $h\in H_2$ such that the solution $(y,z)$ of \eqref{yz0} verifies $y(T)=0$ and $||z(T)-z^T||\leqslant \varepsilon$. Indeed, introducing $A_3(z^0)$ the solution of \eqref{yz0} with $y^0=h=0$, and with the notations of the proof of Proposition \ref{cc}, the superposition principle says that our problem is equivalent to: for any $(y^0,z^0,z^T)\in L^2(\Omega)^3$ and any $\varepsilon>0$, there exists $h\in H_2$ such that $A_1(y^0)=B_1(h)$ and $||A_2(y^0)+A_3(z^0)-B_2(h)-z_T||\leqslant \varepsilon$, which is clearly implies by Proposition \ref{cc} (just change $z_T$ into $z_T-A_3(z_0)$).

\end{remark}
\section{Further results, comments, open problems}
\paragraph{More products.} Of course, one can consider operators with codomain in products of more than two spaces: assume that we have some Hilbert spaces $E_1,\ldots E_p$ on which one wants the ``exact range inclusion property'', and $F_1,\ldots F_m$ on which one wants the ``closure of range inclusion''  property. One can obtain the analogue of Theorem \ref{th:main}  just by setting $H_4=E_1\times \ldots \times E_p$ and $H_5=F_1\times\ldots\times F_m$, endowed with the natural Hilbertian product norm.

\paragraph{The case of Banach spaces.} The analog of Douglas' majorization and factorization theorem  in Banach spaces might be false if we work with non-reflexive Banach spaces (see \cite{MR312287}).  However, the result given in \cite{MR312287} notably gives that we have an analog of Douglas' majorization and factorization theorem for reflexive Banach spaces, for which all orthogonality relation that are true in Hilbert spaces also hold. So Theorem \ref{th:main} is also valid in the case of reflexive Banach spaces.

\paragraph{Extending Proposition \ref{propp}.} Even if the condition $\mathcal N(B^*)=0$ is often verified in practice in many examples (notably coming from control theory), it is a very restrictive condition. Here is a possible way to extend a little bit the results. Assume that $\overline{\mathcal R(B)}$ is itself a cartesian subspace of $H_4\times H_5$. Then, we can replace $B$ with $\tilde B$, obtained by restricting the codomain to $\overline{\mathcal R(B)}$. $\tilde B$ is now with dense range, so $\mathcal N(\tilde B^*)=0$. Since $\overline{\mathcal R(B)}$ is cartesian, we can apply Proposition \ref{propp} to this operator.
In the general case, we have no idea of how to find an analogue to Proposition \ref{propp}. Notably, it would be reasonable that a condition on $A_2$ appears in such an analogue.

\paragraph{Unbounded operators.} We do not  treat the general case of unbounded operators $A$ and $B$ on Hilbert or Banach spaces, that  is more involved (see \cite{MR203464} or \cite{MR3191859}). Here, the situation is likely to be even more complicated and we leave it for further investigation.

\section*{Acknowledgements}
This work was funded by the french Agence Nationale de la Recherche (Grant ANR-22-CPJ1-0027-01).

\vspace*{0.2cm}

\noindent The author would like to thank Lucas Davron for useful comments and corrections.

\bibliographystyle{plain}
\bibliography{biblio}
\end{document}